\newtheorem{thm}{Theorem}
\newtheorem{prop}[thm]{Proposition}
\newtheorem{lemma}[thm]{Lemma}
\theoremstyle{definition}
\newtheorem*{convention}{Convention}
\newcommand\setsep{;\ }
\def\er{\mathbb R}
\def\qe{\mathbb Q}
\def\B{\mathcal B}
\def\en{\mathbb N}
\def \rng {\operatorname{Rng}}
\def \dom {\operatorname{Dom}}
\def \Re {\operatorname{Re}}
\def \Im {\operatorname{Im}}
\begin{document}
\title{On projectional skeletons in Va\v{s}\'ak spaces}
\author{Ond\v{r}ej F.K. Kalenda}
\address{Department of Mathematical Analysis \\
Faculty of Mathematics and Physic\\ Charles University\\
Sokolovsk\'{a} 83, 186 \ 75\\Praha 8, Czech Republic}
\email{kalenda@karlin.mff.cuni.cz}
\thanks{Supported by the Research grant GA \v{C}R P201/12/0290.} 
\subjclass[2010]{46B26, 03C30}
\keywords{Va\v{s}\'ak Banach space, projectional skeleton, elementary submodel}
\begin{abstract} We provide an alternative proof of the theorem saying that any Va\v{s}\'ak (or, weakly countably determined) Banach space admits a full $1$-projectional skeleton. The proof is done with the use of the method of elementary submodels and is comparably simple as the proof given by W.~Kubi\'s (2009) in case of weakly compactly generated spaces.\end{abstract}
\maketitle

\section{Introduction}

Investigation of indexed families of bounded linear projections is
an important tool in the study of nonseparable Banach spaces. One of the first achievements in this area is the famous result of Amir and Lindenstrauss \cite{AL} who proved that any weakly compactly generated Banach space admits a projectional resolution of identity. This was later extended to several larger classes -- Va\v{s}\'ak 
\cite{vasak} and Gul'ko \cite{gulko79} proved the same for weakly countably determined Banach spaces (sometimes called Va\v{s}\'ak spaces), Valdivia \cite{val88,val91} for spaces whose dual unit ball is a Corson compact and even for more general classes. Another result in this direction is due to Fabian and Godefroy \cite{FG} who constructed a projectional resolution of the identity in the dual to any Asplund space.

 A projectional resolution of indentity is a transfinite sequence of norm-one projections satisfying certain properties (for the precise definition see the quoted papers or, for example, \cite[Chapter 6]{fabiankniha}). The main application of such families consists in transferring properties from separable spaces to certain nonseparable ones using transfinite induction 
(see  \cite[Chapter 6]{fabiankniha}).

The original constructions of projectional resolutions of identity are quite technical and involved. There were several attempts to put some common structure to these constructions. One of them is based on a notion of the projectional generator introduced by Orihuela and Valdivia \cite{OV} (for an explanation see also \cite[Chapter 6]{fabiankniha}). It seems that the optimal notion is that of a projectional skeleton introduced by Kubi\'s \cite{kubisSkeleton}. Its advantage is that it says something on the structure of the space itself, while a projectional generator is just a technical tool.

Projectional skeletons can be constructed by a small adjustment of the methods of constructing projectional resolutions or projectional generators. In fact, the existence of a projectional generator implies the existence of a projectional skeleton (cf. \cite[Proposition 7]{kubisSkeleton}) and it is not clear whether the converse is true (cf. \cite[Question 1]{kubisSkeleton}).
Kubi\'s also suggested to construct projectional skeletons using the set-theoretical method of elementary submodels. The key tool is \cite[Theorem 15]{kubisSkeleton} which characterizes the existence of a projectional skeleton in terms of elementary submodels. The advantage of this approach is that the common technical part of the construction is covered by a universal result of set-theoretic nature. Hence the structure of Banach spaces in question is used ``just'' to prove the equivalent condition. Moreover, in case of weakly compactly generated spaces this condition can be verified in a surprisingly simple way, see \cite[Proposition 5]{kubisSkeleton}.
In this paper we provide a comparably simple proof for Va\v{s}\'ak spaces.

\section{Preliminaries}
In this section we recall the basic definitions and some known results related to projectional skeletons and Va\v{s}\'ak spaces. We start by the definition of a projectional skeleton.

Let $X$ be a Banach space. A \emph{projectional skeleton} on $X$ is an indexed system of bounded linear projections $(P_\lambda)_{\lambda\in\Lambda}$ where $\Lambda$ is an up-directed set such that the following 
conditions are satisfied:

 \begin{itemize}
	\item[(i)] $P_\lambda X$ is separable for each $\lambda$,
	\item[(ii)] $P_\lambda P_\mu=P_\mu P_\lambda =P_\lambda$ whenever $\lambda\le\mu$,
	\item[(iii)] if $(\lambda_n)$ is an increasing sequence in $\Lambda$, it has a supremum $\lambda\in\Lambda$ and $P_\lambda [X]=\overline{\bigcup_n P_{\lambda_n}[X]}$,
		\item[(iv)]$X=\bigcup_{\lambda\in\Lambda} P_\lambda [X]$.
 \end{itemize}

\noindent The subspace $D=\bigcup_{\lambda\in\Lambda} P^*_\lambda [X^*]$ is called the \emph{subspace induced by the skeleton}. If $D=X^*$, the projectional skeleton is said to be {\em full}.

Due to \cite[Proposition 9]{kubisSkeleton} we can suppose (up to passing to a closed cofinal subset $\Lambda'\subset\Lambda$) that a skeleton satisfies moreover
\begin{itemize}
 \item[(v)] $\sup_{\lambda\in\Lambda}\|P_\lambda\|<\infty$.
\end{itemize} 
Furhter, by \cite[Lemma 10]{kubisSkeleton}, if a projectional skeleton satisfies the condition (v), the condition (iii) can be strengthened to the following condition:
\begin{itemize}
 \item[(iii')] If $(\lambda_n)$ is an increasing sequence in $\Lambda$, it has a supremum $\lambda\in\Lambda$ and $P_\lambda x=\lim_{n\to\infty} P_{\lambda_n}x$ for each $x\in X$.
\end{itemize} 
In the sequel we will assume that a projectional skeleton is a family satisfying conditions (i),(ii),(iii'),(iv) and (v). If $\|P_\lambda\|=1$ for each $\lambda$, we call the respective skeleton {\em $1$-projectional skeleton}.

There is a close relationship between the constant from the condition (v) and the properties of the induced subspace of the dual. This relationship is described in the following lemma. This lemma can be also proved using \cite[Theorem 15, Lemma 14 and Lemma 4]{kubisSkeleton}. However, up to our knowledge it has not been explicitly formulated and our proof is direct and easy.
Recall that a subspace $D\subset X^*$ is called {\em $r$-norming}, where $r\ge 1$, if for any $x\in X$ we have
$$\left\|x\right\|\le r \sup\{|x^*(x)|\setsep x^*\in D, \|x^*\|\le 1\}= \sup\{|x^*(x)|\setsep x^*\in D, \left\|x^*\right\|\le r\}.$$

\begin{lemma} Let $X$ be a Banach space, $(P_\lambda)_{\lambda\in\Lambda}$ a projectional skeleton on $X$ and $D\subset X^*$ the subspace induced by the skeleton.
\begin{itemize}
	\item[(a)] If $\sup_{\lambda\in\Lambda}\|P_\lambda\|=r\in[1,\infty)$, then $D$ is $r$-norming.
	\item[(b)] If $D$ is $r$-norming for some $r\ge 1$, then there is a closed cofinal $\Lambda'\subset\Lambda$ such that $\|P_\lambda\|\le r$ for each $\lambda\in\Lambda'$. 
\end{itemize}
\end{lemma}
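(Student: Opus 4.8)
The plan is to treat the two parts separately: (a) is a one-line Hahn--Banach argument, while (b) requires a closing-off construction. For (a), fix $x\in X$. By condition (iv) there is $\lambda$ with $x\in P_\lambda[X]$, and since $P_\lambda$ is a projection this means $P_\lambda x=x$. Choose by Hahn--Banach an $f\in X^*$ with $\|f\|=1$ and $f(x)=\|x\|$. Then $P_\lambda^* f\in D$, $\|P_\lambda^* f\|\le\|P_\lambda\|\le r$, and $(P_\lambda^* f)(x)=f(P_\lambda x)=f(x)=\|x\|$. Hence $\sup\{|x^*(x)|\setsep x^*\in D,\ \|x^*\|\le r\}\ge\|x\|$, which is exactly the $r$-norming inequality (the equality between the two suprema in the definition holds because $D$ is a linear subspace).

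For (b) I would put $\Lambda'=\{\lambda\in\Lambda\setsep \|P_\lambda\|\le r\}$ and show it is closed and cofinal. Closedness is immediate: if $(\lambda_n)$ is increasing in $\Lambda'$ with supremum $\lambda$, then by (iii') $P_\lambda x=\lim_n P_{\lambda_n}x$ for every $x$, so $\|P_\lambda x\|\le r\|x\|$ and $\lambda\in\Lambda'$. The substance is cofinality, and the observation that removes the apparent circularity is this: if $x^*\in P_\lambda^*[X^*]$ then $P_\lambda^* x^*=x^*$, whence $x^*(P_\lambda x)=(P_\lambda^* x^*)(x)=x^*(x)$ for every $x\in X$. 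Therefore a single functional $x^*\in P_\lambda^*[X^*]$ with $\|x^*\|\le 1$ already gives $|x^*(P_\lambda x)|=|x^*(x)|\le\|x\|$. Consequently it is enough to find, above a prescribed $\lambda_0$, an index $\lambda$ enjoying the property $(\star)$: for every $y\in P_\lambda[X]$ one has $\|y\|\le r\sup\{|x^*(y)|\setsep x^*\in P_\lambda^*[X^*],\ \|x^*\|\le 1\}$. Indeed, applying $(\star)$ to $y=P_\lambda x$ yields, for each $\eps>0$, some $x^*\in P_\lambda^*[X^*]$ with $\|x^*\|\le 1$ and $|x^*(x)|=|x^*(P_\lambda x)|\ge\|P_\lambda x\|/r-\eps$, forcing $\|P_\lambda x\|\le r\|x\|$, i.e.\ $\lambda\in\Lambda'$.

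To produce such a $\lambda\ge\lambda_0$ I would close off in $\omega$ steps. Given $\lambda_n$, choose a countable dense set $\{y^n_k\}_k$ in the separable space $P_{\lambda_n}[X]$ and, using that $D$ is $r$-norming, functionals $f^n_{k,j}\in D$ with $\|f^n_{k,j}\|\le 1$ and $\Re f^n_{k,j}(y^n_k)\to\|y^n_k\|/r$ as $j\to\infty$. Each $f^n_{k,j}$ lies in some $P_\mu^*[X^*]$; picking $\lambda_{n+1}\ge\lambda_n$ above all the countably many indices $\mu$ so produced (possible since $\Lambda$ is up-directed and increasing sequences have suprema) and using that taking adjoints in (ii) gives $P_\mu^*[X^*]\subseteq P_{\lambda_{n+1}}^*[X^*]$ for $\mu\le\lambda_{n+1}$, we secure $f^n_{k,j}\in P_{\lambda_{n+1}}^*[X^*]$. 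Let $\lambda=\sup_n\lambda_n$; by (iii) $P_\lambda[X]=\ov{\bigcup_n P_{\lambda_n}[X]}$ and every $f^n_{k,j}\in P_\lambda^*[X^*]$. A routine density approximation now verifies $(\star)$: any $y\in P_\lambda[X]$ is within $\eps$ of some $y^n_k$, and the matching $f^n_{k,j}\in P_\lambda^*[X^*]$ of norm $\le 1$ norms $y$ up to the factor $r$ within an error controlled by $\eps$ and $1/j$. Thus $\lambda\in\Lambda'$ and $\lambda\ge\lambda_0$, so $\Lambda'$ is the required closed cofinal set.

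The main obstacle is exactly the circular dependence isolated above: one cannot bound $\|P_\lambda\|$ for an arbitrary $\lambda$, because the norming functionals supplied by $D$ need not be fixed by $P_\lambda^*$ and may have norm as large as $\sup_\mu\|P_\mu\|$. The construction defeats this by forcing $P_\lambda[X]$ to be $r$-normed from \emph{inside} $P_\lambda^*[X^*]$, where every functional acts on $P_\lambda x$ precisely as it does on $x$; the only genuinely technical point is the standard bookkeeping needed to find common upper bounds of the countable families of indices generated at each stage.
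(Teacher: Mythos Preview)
Your proof is correct and follows essentially the same approach as the paper: part (a) is the identical Hahn--Banach one-liner, and for (b) both you and the paper set $\Lambda'=\{\lambda:\|P_\lambda\|\le r\}$, verify closedness via (iii'), and obtain cofinality by an $\omega$-step closing-off that alternates between taking countable dense subsets of $P_{\lambda_n}[X]$ and climbing in $\Lambda$ so that the required norming functionals from $D$ land in $P_{\lambda_{n+1}}^*[X^*]$. The only cosmetic difference is that the paper finishes with a short contradiction argument (assuming $\|P_\lambda\|>r$ and unwinding), whereas you isolate the property $(\star)$ and argue directly; the underlying computation $y^*(P_\lambda x)=y^*(x)$ for $y^*\in P_\lambda^*[X^*]$ is the same in both.
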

 
\begin{proof} 
(a) Let $x\in X$ be arbitrary. Fix $x^*\in X^*$ such that $\|x^*\|=1$ and $|x^*(x)|=\|x\|$.
By the property (iv) of the skeleton there is $\lambda\in\Lambda$ with $P_\lambda x=x$. Then
$$\|x\|=|x^*(x)|=|x^*(P_\lambda x)|=|P_\lambda^* x^*(x)|.$$
Since $P_\lambda^*x^*\in D$ and $\|P_\lambda^*x^*\|\le\|P_\lambda^*\|\cdot\|x^*\|\le r$, this shows that $D$ is $r$-norming.

(b) Suppose that $D$ is $r$-norming and that the skeleton satisfies the condition (v). Set
$$\Lambda'=\{\lambda\in\Lambda; \|P_\lambda\|\le r\}.$$
By the condition (iii') it is clear that $\Lambda'$ is closed in $\Lambda$. So, it is enough to show that $\Lambda'$ is cofinal. 

To do that, fix any $\lambda_0\in\Lambda$. We will construct by induction sequences $(\lambda_n)$, $(A_n)$ and $(B_n)$ such that the following conditions are satisfied for each $n\in\en$.
\begin{itemize}
	\item $A_n$ is a dense countable subset of $P_{\lambda_{n-1}}[X]$.
	\item $B_n$ is a countable subset of $D\cap rB_{X^*}$ such that for each $x\in A_n$ we have
	$\|x\|= \sup\{|x^*(x)|\setsep x^*\in B_n\}$.
	\item $\lambda_n\in\Lambda$, $\lambda_n\ge\lambda_{n-1}$ and $P_{\lambda_n}^*[X^*]\supset B_n$.
\end{itemize}
It is clear that the construction can be done by induction starting from $\lambda_0$. 
Let $\lambda=\sup_n\lambda_n$. Then $\lambda\in\Lambda'$. Indeed, suppose that $\|P_\lambda\|>r$. Fix $x\in X$ and $\varepsilon>0$ such that $\|x\|=1$ and $\|P_\lambda x\|>(1+\varepsilon)r+\varepsilon$. By the property (iii) of the projectional skeleton there is $n\in\en$ and $y\in A_n$ such that $\|y-P_\lambda x\|<\varepsilon$. In particular, then $\|y\|>(1+\varepsilon)r$, hence there is $y^*\in B_n$ with $|y^*(y)|>(1+\varepsilon)r$. Then
$$r\ge\|y^*\|\cdot\|x\|\ge |y^*(x)|=|P_\lambda^*y^*(x)|=|y^*(P_\lambda x)|
\ge |y^*(y)|-|y^*(y-P_\lambda x)|> (1+\varepsilon)r-r\varepsilon=r,$$
a contradiction.
\end{proof}

We continue by recalling the definition of Va\v{s}\'ak spaces. There are several equivalent definitions which are collected in the following theorem.

\begin{thm} Let $X$ be a Banach space. Then the following assertions are equivalent.
\begin{itemize}
	\item[(1)] There is a sequence $(A_n)$ of weak$^*$ compact subsets of $X^{**}$ such that for any $x\in X$ and $x^{**}\in X^{**}\setminus X$ there is $n\in\en$ with $x\in A_n$ and $x^{**}\notin A_n$.
	\item[(2)] There is a family $F_s$, $s\in\bigcup_{k=1}^\infty \en^k$, of weak$^*$ compact subsets of $X^{**}$ indexed by finite sequences of natural numbers, and a set $\Sigma\subset\en^\en$ of infinite sequences of natural numbers such that
	$$X=\bigcup_{\alpha\in\Sigma}\bigcap_{k=1}^\infty F_{\alpha(1),\alpha(2),\dots,\alpha(k)}.$$
	\item[(3)] There is a separable metric space $\Sigma$ and a set-valued mapping $\varphi:\Sigma\to X$ with the properties:
\begin{itemize}
	\item[(a)] For any $s\in\Sigma$ the value $\varphi(s)$ is a nonempty weakly compact subset of $X$.
	\item[(b)] For any $U\subset X$ weakly open the set $\{s\in\Sigma\setsep \varphi(s)\subset U\}$ is open in $\Sigma$.
	\item[(c)] $\varphi$ is onto $X$, i.e., $\bigcup_{s\in\Sigma}\varphi(s)=X$.
\end{itemize}
\end{itemize}
\end{thm}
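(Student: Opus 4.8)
The plan is to prove the equivalence of the three standard characterizations of Va\v{s}\'ak spaces by establishing a cycle of implications $(1)\Rightarrow(2)\Rightarrow(3)\Rightarrow(1)$. Throughout I will exploit the natural isometric embedding $X\hookrightarrow X^{**}$ and the interplay between the weak topology on $X$ and the weak$^*$ topology on $X^{**}$, recalling that for a bounded set the weak$^*$ closure in $X^{**}$ of a subset of $X$ is weakly compact in $X$ exactly when it is contained in $X$ (by Grothendieck's characterization of weak compactness, or equivalently the fact that $X\cap A$ for a weak$^*$ compact $A\subset X^{**}$ is weakly compact).

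\medskip

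\noindent\emph{$(1)\Rightarrow(2)$.} Starting from the sequence $(A_n)$ of weak$^*$ compact sets separating $X$ from $X^{**}\setminus X$, I would index things over finite sequences. For each $s=(s_1,\dots,s_k)\in\en^k$ set $F_s=\bigcap_{i=1}^k A_{s_i}$; each $F_s$ is weak$^*$ compact as a finite intersection of such sets. Given $x\in X$, let $\Sigma$ consist of all $\alpha\in\en^\en$ for which $x\in A_{\alpha(i)}$ for every $i$ and every $x^{**}\in X^{**}\setminus X$ is excluded by some $\alpha(i)$; concretely, for each $x\in X$ one enumerates the indices $n$ with $x\in A_n$ and arranges them into a sequence $\alpha^x$, and one checks that $\bigcap_k F_{\alpha^x(1),\dots,\alpha^x(k)}=\bigcap\{A_n\setsep x\in A_n\}$ meets $X$ precisely in points not separated from $x$, which by property (1) forces this intersection to lie in $X$ and to contain $x$. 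The main point to verify is that the union $\bigcup_{\alpha\in\Sigma}\bigcap_k F_{\alpha\restriction k}$ is exactly $X$: the separation property guarantees no $x^{**}\notin X$ survives all the relevant intersections, while each $x\in X$ is captured by its own branch $\alpha^x$.

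\medskip

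\noindent\emph{$(2)\Rightarrow(3)$.} Here I would take $\Sigma$ (as a subset of the Polish space $\en^\en$ with the product topology) together with its induced separable metric topology, and define $\varphi(\alpha)=X\cap\bigcap_{k=1}^\infty F_{\alpha\restriction k}$. By (2) each such intersection with $X$ is nonempty for $\alpha\in\Sigma$, and being a weak$^*$ closed subset of $X^{**}$ lying inside $X$, it is weakly compact in $X$, giving (a). Surjectivity (c) is immediate from the displayed union in (2). The delicate property is the upper semicontinuity (b): given a weakly open $U\supset\varphi(\alpha)$, I must produce a neighbourhood of $\alpha$ in $\Sigma$ whose image stays inside $U$. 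This follows from a compactness-with-decreasing-intersections argument: since $\bigcap_k F_{\alpha\restriction k}\subset U$ (after extending $U$ to a weak$^*$ open superset in $X^{**}$ using that the weak$^*$ compact sets $F_{\alpha\restriction k}$ decrease), weak$^*$ compactness yields a finite stage $k$ with $F_{\alpha\restriction k}\subset U$, and all $\beta\in\Sigma$ agreeing with $\alpha$ on the first $k$ coordinates then satisfy $\varphi(\beta)\subset F_{\beta\restriction k}=F_{\alpha\restriction k}\subset U$.

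\medskip

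\noindent\emph{$(3)\Rightarrow(1)$.} From the separable metric space $\Sigma$ and the usco map $\varphi$, I would fix a countable base $(U_n)$ of weakly open sets in $X$ (or rather use a countable family separating points, obtained from a countable weak$^*$ dense subset of $B_{X^*}$) and, for each basic open set, take weak$^*$ closures in $X^{**}$ of appropriate $\varphi$-preimages. The goal is to manufacture countably many weak$^*$ compact sets $A_n$ separating $X$ from $X^{**}\setminus X$. I expect the upper semicontinuity (b) to be exactly what lets one control these closures, and the separability of $\Sigma$ to be what keeps the family countable; the main obstacle across the whole argument is this last implication, where one must reconstruct the abstract separating sequence of (1) out of the geometric usco data, verifying both that the constructed sets are weak$^*$ compact and that their separation property holds for every pair $(x,x^{**})$ with $x^{**}\notin X$.
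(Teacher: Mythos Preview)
The paper does not give a proof of this theorem; it simply records that the equivalence follows from \cite[Proposition 7.1.1]{fabiankniha} together with \cite[Theorem 7.9]{kechris}. So your sketch already goes further than the paper does, and there is no argument in the paper to compare line by line.

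Your implications $(1)\Rightarrow(2)$ and $(2)\Rightarrow(3)$ are essentially correct once two small repairs are made: in $(1)\Rightarrow(2)$ the set $\{n:x\in A_n\}$ may be finite, so first adjoin the balls $nB_{X^{**}}$ to the sequence $(A_n)$; and in $(2)\Rightarrow(3)$ the family $(F_s)$ is not assumed monotone along branches, so replace each $F_s$ by $\bigcap_{j\le|s|}F_{s\restriction j}$ before running the decreasing--compact--intersection argument. The genuine gap is $(3)\Rightarrow(1)$, which you yourself label ``the main obstacle'' and then do not prove. Your opening move is already off: the weak topology on an infinite-dimensional $X$ is never second countable, so there is no ``countable base $(U_n)$ of weakly open sets in $X$''; and ``$\varphi$-preimages'' are subsets of $\Sigma$, so taking their weak$^*$ closures in $X^{**}$ makes no sense. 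The countable object you need is a base $(B_n)$ of $\Sigma$. Set $A_{n,m}=\overline{\varphi[B_n]}^{w^*}\cap mB_{X^{**}}$; these are weak$^*$ compact, and the separation property reduces to showing that for each $s\in\Sigma$ one has $\bigcap\{\overline{\varphi[B_n]}^{w^*}: s\in B_n\}=\varphi(s)\subset X$. This is exactly where upper semicontinuity enters: if $y^{**}\notin\varphi(s)$, regularity of the weak$^*$ topology separates the weak$^*$-compact set $\varphi(s)$ from $y^{**}$ by a weak$^*$ open $W\supset\varphi(s)$ with $y^{**}\notin\overline{W}^{w^*}$, and condition (b) then yields a basic $B_n\ni s$ with $\varphi[B_n]\subset W\cap X$, whence $y^{**}\notin\overline{\varphi[B_n]}^{w^*}$.
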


This theorem follows easily from \cite[Proposition 7.1.1]{fabiankniha} using \cite[Theorem 7.9]{kechris}. A Banach space $X$ satisfying the equivalent conditions of the previous theorem is said to be {\em Va\v{s}\'ak} or {\em weakly countably determined} (cf. \cite[Definition 7.1.5]{fabiankniha}).

A set-valued mapping satisfying the conditions (a) and (b) from the theorem is said to be {\em upper semi-continuous compact-valued}, shortly {\em usc-K}.

We will need the following easy topological property of Va\v{s}\'ak spaces, see \cite[Lemma 3]{vasak} or \cite[Theorem 7.1.4]{fabiankniha}.

\begin{lemma}\label{L:L} Let $X$ be a Va\v{s}\'ak Banach space. Then $X$ is weakly Lindel\"of, i.e., $(X,w)$ is Lindel\"of.
\end{lemma}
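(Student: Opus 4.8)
The plan is to exploit characterization (3) of Va\v{s}\'ak spaces, which provides a separable metric space $\Sigma$ together with a usc-K mapping $\varphi\colon\Sigma\to X$ that is onto. The essential idea is that such an upper semicontinuous, compact-valued surjection transfers the Lindel\"of property from $\Sigma$ (which, being separable metric, is Lindel\"of) to $(X,w)$.

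Concretely, let $\mathcal U$ be an arbitrary cover of $X$ by weakly open sets. For each $s\in\Sigma$ the value $\varphi(s)$ is weakly compact, so finitely many members of $\mathcal U$ suffice to cover it; I would fix a finite subfamily $\mathcal U_s\subset\mathcal U$ with $\varphi(s)\subset U_s:=\bigcup\mathcal U_s$, noting that $U_s$ is again weakly open. Setting
$$V_s=\{t\in\Sigma\setsep \varphi(t)\subset U_s\},$$
property (b) of $\varphi$ guarantees that $V_s$ is open in $\Sigma$, and clearly $s\in V_s$. Hence $\{V_s\setsep s\in\Sigma\}$ is an open cover of $\Sigma$.

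Since $\Sigma$ is separable metric, it is Lindel\"of, so this cover admits a countable subcover $\{V_{s_n}\setsep n\in\en\}$. I claim the countable family $\bigcup_n\mathcal U_{s_n}$ then covers $X$: given any $x\in X$, surjectivity of $\varphi$ yields some $s\in\Sigma$ with $x\in\varphi(s)$; choosing $n$ with $s\in V_{s_n}$ gives $\varphi(s)\subset U_{s_n}$, whence $x\in U_{s_n}=\bigcup\mathcal U_{s_n}$. Thus $\mathcal U$ has a countable subcover and $(X,w)$ is Lindel\"of.

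The argument is short, and I expect no serious obstacle, since all the weight is carried by the structural hypotheses encoded in definition (3). The only point requiring care is the correct interplay in the middle step: one must first cover each weakly compact value $\varphi(s)$ by a single weakly open set $U_s$ before invoking the upper semicontinuity condition (b), and then combine this with the elementary fact that a separable metric space is Lindel\"of. Everything else is routine bookkeeping.
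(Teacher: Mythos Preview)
Your argument is correct and is in fact the standard proof that a continuous image of a Lindel\"of space under a usc-K map is Lindel\"of, specialized to the situation of characterization~(3). Note that the paper itself does not supply a proof of this lemma at all; it merely cites \cite[Lemma~3]{vasak} and \cite[Theorem~7.1.4]{fabiankniha}, where essentially the same argument you wrote can be found.
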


\section{Method of elementary submodels}

In this section we briefly recall some basic facts concerning the method of elementary submodels. This set-theoretical method is useful in various branches of mathematics. In particular, Dow in \cite{dow} illustrated its use in topology, Koszmider in \cite{kos05} used it in functional analysis. Later, inspired by \cite{kos05}, Kubi\'s in \cite{kubisSkeleton} gave a mehod of constructing retractional (resp. projectional) skeleton in certain compact (resp. Banach) spaces using this method. In \cite{cuth-fm} the method has been slightly simplified and specified by C\'uth. Another results concerning retractional or projectional skeletons proved using elementary submodels are given in \cite{cuka-mr} and \cite{BHK2}.
 We briefly recall some basic facts. More details may be found e.g. in \cite{cuth-fm} and \cite{cuka-cejm}.

First, let us recall some definitions. Let $N$ be a fixed set and $\phi$ a formula in the language of $ZFC$. Then the {\em relativization of $\phi$ to $N$} is the formula $\phi^N$ which is obtained from $\phi$ by replacing each quantifier of the form ``$\forall x$'' by ``$\forall x\in N$'' and each quantifier of the form ``$\exists x$'' by ``$\exists x\in N$''.

If $\phi(x_1,\ldots,x_n)$ is a formula with all free variables shown (i.e., a formula whose free variables are exactly $x_1,\ldots,x_n$) then  $\phi$ is said to be {\em absolute for $N$} if
\[
\forall a_1,\ldots,a_n\in N\quad (\phi^N(a_1,\ldots,a_n) \Leftrightarrow \phi(a_1,\ldots,a_n)).
\]

A list of formulas, $\phi_1,\ldots,\phi_n$, is said to be {\em subformula closed} if  every subformula of a formula in the list is also contained in the list.

The method is based mainly on the following theorem (a proof can be found in \cite[Chapter IV, Theorem 7.8]{Kunen}).

\begin{thm}\label{T:countable-model}
Let $\phi_1, \ldots, \phi_n$ be any formulas and $Y$ any set. Then there exists a set $M \supset Y$ such that
$\phi_1, \ldots, \phi_n \text{ are absolute for } M$ and $|M| \leq \max(\omega,|Y|)$.
\end{thm}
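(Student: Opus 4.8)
The plan is to realize $M$ as a Skolem hull of $Y$, following the downward L\"owenheim--Skolem method. First I would replace the given list $\phi_1,\dots,\phi_n$ by a finite subformula closed list $\psi_1,\dots,\psi_k$ containing it; since absoluteness of the larger list implies absoluteness of the original one, this costs nothing. The point of passing to a subformula closed list is that absoluteness can then be verified by induction on the complexity of the formulas, the inductive hypothesis always being available because every subformula of a listed formula is again listed.

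Next I would build $M$ as an increasing union $M=\bigcup_{m\in\en} M_m$ of sets, starting from $M_0=Y$. The closing-off step is the heart of the argument: given $M_m$, for every subformula in the list of the form $\exists x\,\theta(x,y_1,\dots,y_j)$ and every tuple $\bar a=(a_1,\dots,a_j)$ of elements of $M_m$ for which $\exists x\,\theta(x,\bar a)$ holds in the universe, I would adjoin a single witness $w(\bar a,\theta)$ with $\theta(w(\bar a,\theta),\bar a)$, and set $M_{m+1}$ to be $M_m$ together with all these witnesses. The number of pairs $(\bar a,\theta)$ treated at stage $m$ is at most $\max(\omega,|M_m|)$, so each step adds at most $\max(\omega,|M_m|)$ new elements and $|M_{m+1}|\le\max(\omega,|M_m|)$. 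By induction $|M_m|\le\max(\omega,|Y|)$ for every $m$, whence $|M|\le\max(\omega,|Y|)$, while $M\supset Y$ is immediate.

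The delicate point, which I expect to be the main obstacle, is the simultaneous choice of the witnesses $w(\bar a,\theta)$: the class of all $x$ with $\theta(x,\bar a)$ need not be a set, so the axiom of choice cannot be applied to it directly. I would resolve this by Scott's trick: for each relevant pair let $\rho$ be the least ordinal for which $\{x\in V_\rho\setsep\theta(x,\bar a)\}$ is nonempty, and select the witness from this genuine, nonempty set. These sets are indexed by a set, so an ordinary choice function selects one witness from each, making the construction legitimate within $ZFC$. Alternatively one could first invoke the reflection theorem to trap the whole argument inside some $V_\alpha$, a set, where choice is unproblematic.

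Finally I would verify that every $\psi_i$ is absolute for $M$ by induction on its structure. Atomic formulas such as $x\in y$ and $x=y$ are unaffected by relativization and hence trivially absolute, and absoluteness is inherited through the propositional connectives. The only substantive case is $\exists x\,\theta$: the implication $(\exists x\,\theta)^M(\bar a)\Rightarrow(\exists x\,\theta)(\bar a)$ follows from the inductive absoluteness of $\theta$ together with $M\subset V$, while the reverse implication is exactly the closing-off property built into $M$ --- if a witness exists in the universe it was adjoined at the stage where $\bar a$ first appeared, so a witness lies in $M$, and the inductive hypothesis turns it into an $M$-witness of $\theta^M$. Applying this to $\phi_1,\dots,\phi_n$, which are among the $\psi_i$, completes the proof.
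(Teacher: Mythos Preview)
Your proposal is correct and is precisely the standard downward L\"owenheim--Skolem / Skolem hull argument. The paper itself does not supply a proof of this theorem at all; it merely cites Kunen's textbook, and what you have written is essentially the proof found there (including the subformula-closed extension, the $\omega$-step closing-off, and the Tarski--Vaught style induction on complexity). Your handling of the class-vs.-set issue via Scott's trick or reflection is the only point that goes beyond a sketch, and it is done correctly.
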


Since the set from Theorem~\ref{T:countable-model} will often be used, the following notation is useful.

Let $\phi_1, \ldots, \phi_n$ be any formulas and $Y$ be any countable set.
Let $M \supset X$ be a countable set such that $\phi_1, \ldots, \phi_n$ are absolute for $M$.
Then we say that $M$ is an \emph{elementary model for $\phi_1,\ldots,\phi_n$ and $Y$ containing $X$}.
This is denoted by $M \prec (\phi_1,\ldots,\phi_n; Y)$.

The fact that certain formula is absolute for $M$ will always be used in order to satisfy the assumption of the following lemma from \cite[Lemma 2.3]{cuthRmoutilZeleny}. Using this lemma we can force the model $M$ to contain all the needed objects created (uniquely) from elements of $M$.

\begin{lemma}\label{l:unique-M}
Let $\phi(y,x_1,\ldots,x_n)$ be a formula with all free variables shown and $Y$ be a countable set.
Let $M$ be a fixed set, $M \prec (\phi, \exists y \colon \phi(y,x_1,\ldots,x_n);\; Y)$, and
$a_1,\ldots,a_n \in M$ be such that there exists a set $u$ satisfying
$\phi(u,a_1,\ldots,a_n)$. Then there exists $u \in M$ such that $\phi(u,a_1,\ldots,a_n)$.
\end{lemma}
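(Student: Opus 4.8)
The plan is to unwind the definition of absoluteness twice, once in each direction, using the two formulas on which $M$ is assumed elementary. By hypothesis there is a set $u$ satisfying $\phi(u,a_1,\ldots,a_n)$, so the sentence $\exists y\colon\phi(y,a_1,\ldots,a_n)$ holds (in the universe $V$). First I would invoke absoluteness of the formula $\exists y\colon\phi(y,x_1,\ldots,x_n)$ for $M$: since this formula is on the list for which $M\prec(\phi,\exists y\colon\phi(y,x_1,\ldots,x_n);Y)$, and the parameters $a_1,\ldots,a_n$ all lie in $M$, the definition of absoluteness yields that the relativization $(\exists y\colon\phi(y,x_1,\ldots,x_n))^M$ holds at $a_1,\ldots,a_n$.

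Next I would read off what this relativization says. By the definition of relativization to $M$, the statement $(\exists y\colon\phi)^M(a_1,\ldots,a_n)$ is precisely $\exists y\in M\colon\phi^M(y,a_1,\ldots,a_n)$. Hence there exists some $u\in M$ for which $\phi^M(u,a_1,\ldots,a_n)$ holds. This is already the element of $M$ we are after; the only remaining issue is that so far we know it satisfies the \emph{relativized} formula, not $\phi$ itself.

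To conclude, I would apply absoluteness a second time, now in the opposite direction and to the formula $\phi$ itself. At this point all the parameters $u,a_1,\ldots,a_n$ belong to $M$, and $\phi$ is also on the list of formulas absolute for $M$, so $\phi^M(u,a_1,\ldots,a_n)\iff\phi(u,a_1,\ldots,a_n)$. Since the left-hand side holds, so does the right-hand side, and we obtain $u\in M$ with $\phi(u,a_1,\ldots,a_n)$, as required.

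There is no genuine obstacle here; the argument is a direct unwinding of definitions. The one point to keep straight is the two-directional use of absoluteness: the formula $\exists y\colon\phi$ is needed to pull a witness down into $M$ (passing from $V$ to the relativization), whereas $\phi$ itself is needed to push that witness back up (passing from the relativization back to $V$). This explains why both formulas must appear in the list for which $M$ is elementary, and why neither alone would suffice. No quantifiers beyond the leading $\exists y$ require tracking, since $\phi$ is treated as a single absolute formula throughout.
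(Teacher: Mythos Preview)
Your proof is correct and follows exactly the same two-step approach as the paper's own proof: first use absoluteness of $\exists y\colon\phi$ to obtain $u\in M$ with $\phi^M(u,a_1,\ldots,a_n)$, then use absoluteness of $\phi$ to conclude $\phi(u,a_1,\ldots,a_n)$. Your write-up is simply more detailed, explicitly unpacking the relativization and commenting on why both formulas are needed.
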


\begin{proof}Let us give here the proof just for the sake of completeness. Using the absoluteness of the formula $\exists u\colon \phi(u,x_1,\ldots,x_n)$ there exists $u\in M$ satisfying $\phi^M(u,a_1,\ldots,a_n)$.
Using the absoluteness of $\phi$ we get, that for this $u\in M$ the formula $\phi(u,a_1,\ldots,a_n)$ holds.
\end{proof}

We shall also use the following convention.

\begin{convention}
Whenever we say ``\emph{for any suitable model $M$ (the following holds \dots)}''
we mean that  ``\emph{there exists a list of formulas $\phi_1,\ldots,\phi_n$ and a countable set $Y$ such that for every $M \prec (\phi_1,\ldots,\phi_n;Y)$ (the following holds \dots)}''.
\end{convention}

By using this  terminology we lose the information about the formulas $\phi_1,\ldots,\phi_n$ and the set $Y$. However, this is not important in applications.

Let us recall several further results about elementary models which we will need. 

\begin{lemma}\label{l:predp}
There are formulas $\theta_1,\dots,\theta_m$ and a countable set $Y_0$ such that any $M\prec(\theta_1,\ldots,\theta_m;\; Y_0)$ satisfies the following conditions:
\begin{itemize}

	\item[(i)] $\er,\mathbb{C},\qe,\qe+i\qe,\en\in M$ and the operations of the addition and multiplication on $\mathbb{C}$, the functions $z\mapsto \Re z$ and $z\mapsto\Im z$ on $\mathbb{C}$ and the standard order on $\er$ belong to $M$.

	\item[(ii)] If $f\in M$ is a mapping, then $\dom(f)\in M$, $\rng(f)\in M$ and $f[M]\subset M$. Further, for any $A\in M$ we have $f[A]\in M$ as well.

	\item[(iii)] If $A$ is finite, then $A\in M$ if and only if $A\subset M$.
   
  \item[(iv)] If $x_1,\dots,x_n$ are arbitrary, then $x_1,\dots,x_n\in M$ if and only if the ordered $n$-tuple $(x_1,\dots,x_n)$ is an element of $M$.
		
	\item[(v)] If $A\in M$ is a countable set, then $A\subset M$.

	\item[(vi)] If $A,B\in M$, then $A\cup B\in M$, $A\cap B\in M$, $A\setminus B\in M$.

	\item[(vii)] If $A,B\in M$, then $A\times B\in M$.
	
	\item[(viii)] If $X\in M$ is a real vector space, then $X\cap M$ is $\qe$-linear.

	\item[(ix)] If $X\in M$ is a complex vector space, then $X\cap M$ is $(\qe+i\qe)$-linear.

	\item[(x)] If  $X\in M$ is a Banach space, then $X^*\in M$ as well.
	
	\item[(xi)] If $X\in M$ is a separable metric space, then there is a dense countable set $C\subset X$ with $C\in M$ and there is a countable basis $\B$ of $X$ such that $\B\in M$.

\end{itemize}
\end{lemma}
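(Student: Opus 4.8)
The plan is to assemble a single finite, subformula-closed list of formulas $\theta_1,\dots,\theta_m$ together with a countable set $Y_0$, and then to verify the eleven conditions one at a time, the main engine throughout being Lemma~\ref{l:unique-M}. The guiding principle is simple: whenever an object is the \emph{unique} set satisfying a formula $\phi$ in parameters already in $M$, I put $\phi$ and its existential closure $\exists y\,\phi$ on the list, so that Lemma~\ref{l:unique-M} forces that object into $M$; and whenever I only need \emph{some} witness of a true existential statement, the same lemma again supplies one inside $M$. Condition (i) needs no work beyond bookkeeping: I simply place $\er,\mathbb C,\qe,\qe+i\qe,\en$ and the finitely many named operations (the addition and multiplication on $\mathbb C$, the maps $\Re,\Im$, the order on $\er$) into $Y_0$, and since every admissible $M$ satisfies $M\supset Y_0$ these lie in $M$ automatically.

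The ``unique witness'' conditions are then routine. For (ii) the sets $\dom(f)$, $\rng(f)$ and, for $A\in M$, the image $f[A]$ are each the unique set satisfying an absolute defining formula in the parameters $f$ (and $A$), so Lemma~\ref{l:unique-M} places them in $M$; the inclusion $f[M]\subset M$ follows because for $x\in M\cap\dom(f)$ the value $f(x)$ is the unique $y$ with $(x,y)\in f$, which is absolute. Conditions (vi) (the Boolean combinations $A\cup B$, $A\cap B$, $A\setminus B$) and (vii) ($A\times B$) are handled identically, each object being uniquely determined by its defining formula. Condition (iv) splits into two unique-witness arguments: the Kuratowski tuple $(x_1,\dots,x_n)$ is uniquely built from its coordinates, and conversely each coordinate is uniquely extracted from the tuple. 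For (x) the dual $X^*$, with its operations and norm, is uniquely determined by $X$, so ``$Y$ is the dual of $X$'' is a defining formula and $X^*\in M$; for (xi) the statements ``there is a countable dense subset'' and ``there is a countable basis'' are true for a separable metric space $X\in M$, so their existential forms yield witnesses $C,\B\in M$, which are then genuine subsets of $M$ by condition (v).

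The crux is condition (v), from which the remaining points follow. First I would show $\en\subset M$: the empty set is the unique witness of ``$x$ is empty'', hence $\emptyset\in M$, and closure under the map $x\mapsto x\cup\{x\}$ (using the singleton formula together with (vi)) puts every von Neumann natural number into $M$ by induction in the metatheory. Now let $A\in M$ be countable. Then ``$\exists f$: $f$ maps $\en$ onto $A$'' is true, so by the existential form of Lemma~\ref{l:unique-M} there is such an $f\in M$; since every $a\in A$ equals $f(n)$ for some $n\in\en\subset M$, and $f(n)\in M$ by (ii), we get $A\subset M$. This proves (v), and in particular gives $\qe,\qe+i\qe\subset M$. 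With (v) in hand, (iii) follows: if $A$ is finite and $A\in M$ then $A\subset M$ by (v), while if $A=\{a_1,\dots,a_k\}\subset M$ then $A$ is assembled from singletons and finitely many unions inside $M$. Finally (viii) and (ix): once the vector addition and scalar multiplication of $X$ are available in $M$, for $x,y\in X\cap M$ and a scalar $q\in\qe$ (or $q\in\qe+i\qe$) the pairs $(x,y)$ and $(q,x)$ lie in $M$ by (iv), whence $x+y$ and $qx$ lie in $M\cap X$ by (ii), giving the claimed rational linearity.

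I expect the main obstacle to be twofold. The first is the careful bookkeeping needed to keep $\theta_1,\dots,\theta_m$ finite and subformula-closed while it simultaneously makes absolute all the defining formulas invoked above (emptiness, singleton, pairing, unions, domain, range, function application, product, tuple coding, dual, dense subset, basis); this is mechanical but must be settled once and for all. The second, more conceptual, point concerns conditions (viii)--(x): to speak of ``$X\cap M$ is $\qe$-linear'' or of the dual one must treat $X$ not as a bare set but as the structure carrying its operations and norm, and ensure those operations themselves land in $M$---which they do, either as objects presented together with $X$ or, in the derived case of $X^*$, via the unique-witness mechanism. Once these operations are secured in $M$, every linearity and duality assertion reduces to the already-established (ii), (iv) and (v), so the genuinely delicate work is really the verification of (v) together with this structural setup.
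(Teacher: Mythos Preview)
Your proposal is correct and follows essentially the same approach as the paper: both rest on repeated applications of Lemma~\ref{l:unique-M} to force uniquely-defined (or existentially witnessed) objects into $M$, with the basic number systems and operations placed into $Y_0$. The paper's own proof is almost entirely by citation (to \cite{cuth-fm} and \cite[Lemma 4.3]{BHK2}) and only supplies the formula needed for the countable basis in (xi), whereas you spell out the standard details---in particular the key step (v) via $\en\subset M$ and a surjection $f\colon\en\to A$ in $M$---but the underlying strategy is identical.
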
 

\begin{proof} The proof is based on Lemma \ref{l:unique-M} and some of these results can be found in \cite{cuth-fm}.
The precised statement is proved in \cite[Lemma 4.3]{BHK2}. More precisely, the quoted lemma contains assertions (i)--(x) and
the existence of the set $C$ in (xi). However, the existence of $\B$ can be proved similarly using the formula
\begin{multline*}\exists\B( \forall B\in\B(B\mbox{ is an open subset of }X)\ \&\ \exists f(f\mbox{ is a mapping of $\en$ onto }\B)\\ \&\ 
\forall G\subset X\forall x\in G(G\mbox{ is open } \Rightarrow \exists B\in \B (x\in B\subset G)))\end{multline*}
\end{proof}

The following lemma is the key tool for constructing projections using elementary submodels. It goes back to \cite[Lemma 4]{kubisSkeleton} and in the current form it is proved in \cite[Lemma 5.1]{BHK2}. The fourth assertion which is not explicitly stated in \cite{BHK2} is an immediate consequence of the first three ones.

\begin{lemma}\label{le:projekce}
For a suitable elementary model $M$ the following holds: Let $X$ be a Banach space and $D\subset X^*$ an $r$-norming subspace.
If $X\in M$ and $D\in M$, then the following hold:

\begin{itemize}
	\item $\overline{X\cap M}$ is a closed linear subspace of $X$;

	\item $\overline{X\cap M}\cap (D\cap M)_\perp=\{0\}$;

	\item the canonical projection of  $\overline{X\cap M}+(D\cap M)_\perp$ onto  $\overline{X\cap M}$ along $(D\cap M)_\perp$
	has norm at most $r$;

  \item $\overline{X\cap M}+(D\cap M)_\perp$ is a closed linear subspace of $X$.
\end{itemize}
\end{lemma}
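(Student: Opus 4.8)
The plan is to reduce all four assertions to one central fact: the countable set $D\cap M$ is $r$-norming on the subspace $\overline{X\cap M}$, that is,
$\|y\|\le r\sup\{|x^*(y)|\setsep x^*\in D\cap M,\ \|x^*\|\le 1\}$ for every $y\in\overline{X\cap M}$. Granting this, the statements follow by soft arguments. For the first assertion, by Lemma~\ref{l:predp}(viii)--(ix) the set $X\cap M$ is a $\qe$-linear (resp.\ $(\qe+i\qe)$-linear) subspace, and since scalar multiplication is continuous and $\qe$ is dense in $\er$, its norm-closure $\overline{X\cap M}$ is a genuine closed linear subspace of $X$.

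The heart of the matter is establishing the norming property, which is where I expect the only real use of elementarity. First I would prove it for $y\in X\cap M$. Since $D$ is $r$-norming, for each rational $t<\frac1r\|y\|$ the statement ``there exists $x^*\in D$ with $\|x^*\|\le 1$ and $|x^*(y)|>t$'' holds in $X$; arranging the list of formulas defining a suitable model so as to include this one, Lemma~\ref{l:unique-M} produces a witness $x^*\in D\cap M$ (note $y\in M$, and $t\in\qe\subset M$). Taking the supremum over such $t$ gives $\sup\{|x^*(y)|\setsep x^*\in D\cap M,\ \|x^*\|\le 1\}\ge\frac1r\|y\|$, i.e.\ the norming inequality for $y\in X\cap M$. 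To pass to the closure I would use that the function $N(y):=\sup\{|x^*(y)|\setsep x^*\in D\cap M,\ \|x^*\|\le1\}$ is $1$-Lipschitz, being a supremum of functions with Lipschitz constant at most $1$; hence for $y_n\to y$ with $y_n\in X\cap M$ both $N(y_n)\to N(y)$ and $\|y_n\|\to\|y\|$, so $\|y_n\|\le rN(y_n)$ passes to the limit.

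With the norming property in hand the remaining three assertions are immediate. If $x\in\overline{X\cap M}\cap(D\cap M)_\perp$, then $x^*(x)=0$ for every $x^*\in D\cap M$, so $N(x)=0$ and $\|x\|\le rN(x)=0$; this is the trivial intersection. For the projection bound, write $z=y+w$ with $y\in\overline{X\cap M}$ and $w\in(D\cap M)_\perp$; every $x^*\in D\cap M$ with $\|x^*\|\le 1$ satisfies $x^*(w)=0$, so $|x^*(y)|=|x^*(z)|\le\|z\|$, whence $N(y)\le\|z\|$ and therefore $\|y\|\le rN(y)\le r\|z\|$, giving that the canonical projection $P$ onto $\overline{X\cap M}$ along $(D\cap M)_\perp$ satisfies $\|P\|\le r$. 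Finally, closedness of $\overline{X\cap M}+(D\cap M)_\perp$ follows from boundedness of $P$ in the standard way: if $z_n=y_n+w_n\to z$, then $\|y_n-y_m\|=\|P(z_n-z_m)\|\le r\|z_n-z_m\|$ shows $(y_n)$ is Cauchy, so $y_n\to y\in\overline{X\cap M}$ and $w_n=z_n-y_n\to z-y$, which lies in the closed subspace $(D\cap M)_\perp=\bigcap_{x^*\in D\cap M}\ker(x^*)$; hence $z=y+(z-y)$ belongs to the sum.

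The main obstacle, and the step requiring the most care, is the elementarity argument in the second paragraph: one must phrase the existence of a norming functional as a first-order formula with parameters $X$, $D$, $y$, and $t$ that is absolute for the model, so that Lemma~\ref{l:unique-M} applies and the witness is found inside $D\cap M$. Everything else is either a direct consequence of Lemma~\ref{l:predp} or a routine limiting and boundedness argument.
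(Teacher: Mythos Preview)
Your proposal is correct. The paper itself does not prove this lemma: it merely cites \cite[Lemma~4]{kubisSkeleton} and \cite[Lemma~5.1]{BHK2}, remarking only that the fourth assertion follows immediately from the first three. Your argument is precisely the standard one found in those references: the essential step is to use elementarity (via Lemma~\ref{l:unique-M}) to show that $D\cap M$ is $r$-norming on $\overline{X\cap M}$, after which the trivial intersection, the norm bound on the projection, and the closedness of the sum all follow by the soft linear-algebra reasoning you outline. Your identification of the care needed in formulating the existence of a norming functional as a formula absolute for $M$ with parameters in $M$ is apt, and is exactly the point where the ``suitable'' list of formulas must be chosen appropriately.
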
 

Finally, the following lemma characterizes the existence of a projectional skeleton using elementary submodels. It essentially follows from  \cite[Theorem 15]{kubisSkeleton}, the current form is proved in \cite[Lemma 5.2]{BHK2}. If the condition (ii) is fulfilled, following \cite{kubisSkeleton} we say that {\em $D$ generates projections on $X$}.

\begin{lemma}\label{le:generovani} Let $X$ be a Banach space and $D\subset X^*$ a norming subspace. Then the following two assertions are equivalent
\begin{itemize}
	\item[(i)] $X$ admits a projectional skeleton such that $D$ is contained in the subspace induced by the skeleton.
	\item[(ii)] For any suitable elementary model $M$ 
	$$\overline{X\cap M}+(D\cap M)_\perp=X.$$ 
\end{itemize}
\end{lemma}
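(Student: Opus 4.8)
The plan is to prove both implications with the projection-producing machinery of Lemma~\ref{le:projekce}, using suitable elementary models simultaneously as an index set (for the forward construction) and as a testing device (for the converse). Since $D$ is $r$-norming for some $r\ge1$, Lemma~\ref{le:projekce} supplies a finite list of formulas and a countable set $Y$---which I enlarge so that $X,D\in Y$ and the conclusions of Lemma~\ref{l:predp} hold---such that for every suitable $M$ containing $X$ and $D$ the set $\overline{X\cap M}$ is a closed subspace, $\overline{X\cap M}\cap(D\cap M)_\perp=\{0\}$, and the canonical projection of $\overline{X\cap M}+(D\cap M)_\perp$ onto $\overline{X\cap M}$ along $(D\cap M)_\perp$ has norm at most $r$.

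To prove (ii)$\Rightarrow$(i), I take for $\Lambda$ the family of all such suitable models, ordered by inclusion. Assumption (ii) gives $\overline{X\cap M}+(D\cap M)_\perp=X$ for each $M\in\Lambda$, so the above canonical projection is defined on all of $X$; I denote it $P_M$ and record $\|P_M\|\le r$. Property (i) of a skeleton holds because $X\cap M$ is countable, whence $P_M[X]=\overline{X\cap M}$ is separable; property (v) holds with the constant $r$; property (iv) holds since every $x\in X$ lies in some $M\in\Lambda$ and then $x\in X\cap M\subset P_M[X]$. Up-directedness follows from Theorem~\ref{T:countable-model}: given $M_1,M_2\in\Lambda$ I choose a suitable $M\supset M_1\cup M_2$. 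For the compatibility property (ii) I use, for $M\subset N$ in $\Lambda$, that $\overline{X\cap M}\subset\overline{X\cap N}$ while $(D\cap N)_\perp\subset(D\cap M)_\perp$: then $P_NP_M=P_M$ because $\rng P_M\subset\rng P_N$, and $P_MP_N=P_M$ because $P_Nx-x\in\ker P_N=(D\cap N)_\perp\subset(D\cap M)_\perp=\ker P_M$. For property (iii$'$), an increasing sequence $(M_n)$ in $\Lambda$ has supremum $M=\bigcup_nM_n$, which again lies in $\Lambda$ (a countable increasing union of sets absolute for a fixed finite list of formulas is absolute for that list); from $X\cap M=\bigcup_n(X\cap M_n)$ I get $P_M[X]=\overline{\bigcup_nP_{M_n}[X]}$, and the uniform bound $r$ upgrades this to $P_Mx=\lim_nP_{M_n}x$. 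Finally $D$ lies in the induced subspace: for $d\in D$ I pick $M\in\Lambda$ with $d\in M$, and since $d$ annihilates $(D\cap M)_\perp=\ker P_M$ we get $d\in(\ker P_M)^\perp=\rng(P_M^*)$.

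For (i)$\Rightarrow$(ii) I fix a projectional skeleton $(P_\lambda)_{\lambda\in\Lambda}$ whose induced subspace contains $D$ and take the suitable model $M$ rich enough to contain $X$, $D$, $\Lambda$ and the mapping $\lambda\mapsto P_\lambda$. Then $\Lambda\cap M$ is countable and up-directed (directedness of $\Lambda$ reflects into $M$), so it has a cofinal increasing sequence $(\lambda_n)$; by (iii$'$) this sequence has a supremum $\mu\in\Lambda$ with $P_\mu x=\lim_nP_{\lambda_n}x$. I claim $P_\mu[X]=\overline{X\cap M}$ and $\ker P_\mu\subset(D\cap M)_\perp$, whence $X=P_\mu[X]+\ker P_\mu\subset\overline{X\cap M}+(D\cap M)_\perp$ as desired. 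The inclusion $P_\mu[X]\subset\overline{X\cap M}$ holds because each $P_{\lambda_n}[X]$ is a separable space lying in $M$, so by Lemma~\ref{l:predp}(xi) it has a dense countable subset contained in $X\cap M$, giving $P_{\lambda_n}[X]\subset\overline{X\cap M}$; the reverse inclusion holds because for $x\in X\cap M$ property (iv), reflected into $M$, yields $\lambda\in\Lambda\cap M$ with $P_\lambda x=x$, and then $P_\mu x=x$. For the kernel, given $d\in D\cap M$ the hypothesis $D\subset\bigcup_\lambda P_\lambda^*[X^*]$ reflects into $M$ to produce $\lambda\in\Lambda\cap M$ with $P_\lambda^*d=d$; then for $z\in\ker P_\mu$ one has $P_\lambda z=P_\lambda P_\mu z=0$, so $d(z)=P_\lambda^*d(z)=d(P_\lambda z)=0$.

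I expect the crux to be properties (ii) and especially (iii$'$) in the direction (ii)$\Rightarrow$(i): showing that the model-induced projections are mutually compatible and, above all, that a countable increasing union of suitable models is again suitable and yields the pointwise limit of the corresponding projections. This is exactly where the absoluteness of the chosen formulas (Tarski--Vaught style reflection) must be married to the duality between ranges and pre-annihilators; once Lemma~\ref{le:projekce} is available, the remaining skeleton axioms and the whole converse implication are comparatively routine.
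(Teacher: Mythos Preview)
The paper does not actually prove this lemma: it simply records that the result ``essentially follows from \cite[Theorem 15]{kubisSkeleton}'' and that ``the current form is proved in \cite[Lemma 5.2]{BHK2}''. So there is no in-paper argument to compare against; your task reduces to whether your proof is correct and whether it matches the standard one from those references.

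Your argument is correct and is essentially the standard proof. A couple of points you could make more explicit: in the direction (ii)$\Rightarrow$(i), the fact that $\bigcup_n M_n$ is again a suitable model requires the list of formulas to be \emph{subformula closed} (this is how the Tarski--Vaught chain argument works); the paper does assume this in its concrete applications, so you should say so when you set up your list. In the direction (i)$\Rightarrow$(ii), your reflection steps (finding $\lambda\in\Lambda\cap M$ with $P_\lambda x=x$, respectively with $P_\lambda^*d=d$) are applications of Lemma~\ref{l:unique-M} and need the relevant existential formulas to be on the list and the objects $\Lambda$, the assignment $\lambda\mapsto P_\lambda$, and the dual assignment to be in $M$; you do this implicitly, but spelling it out would match the level of care in the paper's other proofs. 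With those clarifications your proof is the one the references give.
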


\section{The result}

Our main result is the following theorem:

\begin{thm} Let $X$ be a (real or complex) Va\v{s}\'ak Banach space. Then $X^*$ generates projections on $X$. In particular, $X$ admits a full $1$-projectional skeleton.
\end{thm}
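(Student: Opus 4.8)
The plan is to prove the first assertion---that $X^*$ generates projections on $X$---directly from Lemma~\ref{le:generovani}, and to read off the ``in particular'' as a formal consequence. Since $X^*$ is trivially $1$-norming, Lemma~\ref{le:projekce} applies with $D=X^*$ and $r=1$: for every suitable model $M$ with $X,X^*\in M$ the set $W:=\overline{X\cap M}+(X^*\cap M)_\perp$ is automatically a closed subspace of $X$ carrying a projection of norm at most $1$ onto $\overline{X\cap M}$. Thus the entire task reduces to establishing, for every suitable $M$, the equality
\[ \overline{X\cap M}+(X^*\cap M)_\perp=X. \]
Granting this, Lemma~\ref{le:generovani} produces a projectional skeleton whose induced subspace contains $X^*$ and hence equals $X^*$, so the skeleton is full; and since $X^*$ is $1$-norming, part~(b) of the norming lemma (with $r=1$) lets me pass to a closed cofinal $\Lambda'$ on which $\|P_\lambda\|\le1$, yielding the full $1$-projectional skeleton.

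Because $W$ is a closed subspace, by the Hahn--Banach theorem the displayed equality is equivalent to $W^\perp=\{0\}$; computing annihilators,
\[ W^\perp=(X\cap M)^\perp\cap\overline{X^*\cap M}^{w^*}. \]
So it suffices to show that \emph{every} $f\in X^*$ which vanishes on $X\cap M$ and lies in the weak$^{*}$ closure of $X^*\cap M$ must be $0$. To attack this I would feed the Va\v{s}\'ak structure into the model. Using characterization~(3), fix a separable metric space $\Sigma$, a countable dense set $\{\sigma_k\}\subseteq\Sigma$, and an usc-K map $\varphi\colon\Sigma\to X$ onto $X$ with weakly compact values, and arrange $\Sigma,\varphi\in M$ together with the data of Lemma~\ref{l:predp}; then each $\sigma_k\in M$ and each weakly compact value $\varphi(\sigma_k)$ lies in $M$.

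The engine is a selection argument carried out inside $M$. For any $g\in X^*\cap M$ and any $k$, the real part of $g$ attains its maximum on the weakly compact set $\varphi(\sigma_k)$, and since $g,\varphi(\sigma_k)\in M$, Lemma~\ref{l:unique-M} (applied to the formula asserting the existence of a maximiser) supplies such a maximiser already in $M$, hence in $X\cap M$; the same applies to suitable near-maximisers and, with the obvious modification, to the complex case. Consequently our $f$, which kills $X\cap M$, annihilates a rich family of (near-)support points of \emph{every} $g\in X^*\cap M$ on the sets $\varphi(\sigma_k)$. Writing $f$ as a weak$^{*}$ limit of a net from $X^*\cap M$ and combining this with the weak compactness of the values $\varphi(\sigma_k)$ (and the Eberlein--\v{S}mulian theorem, to convert the relevant approximations into genuine weakly convergent sequences), together with the separating/covering property built into the Va\v{s}\'ak family and the weak Lindel\"of property of $X$ (Lemma~\ref{L:L}), I would force $f$ to vanish on a dense subset of $X$, whence $f=0$.

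The hard part is exactly this last combination. Upper semicontinuity of $\varphi$ controls the values only from above---it traps $\varphi(\sigma_k)$ inside weak neighbourhoods of a limiting value but gives no lower control forcing points near a prescribed element of $X$---so there is a genuine gap between what the model sees at the dense points $\sigma_k$ and an arbitrary $x\in X$; moreover the weakly compact values need not be weakly metrizable, so $M$ need not contain a weakly dense subset of any single value. Bridging this gap is where the countable determination of the Va\v{s}\'ak space is indispensable: one must use the separating family (the weak$^{*}$ compact sets $A_n$ of characterization~(1), or equivalently the $F_s$ of~(2)) to pin $f$ down from its vanishing on the countably many model-selected support points, and it is in verifying this final implication---morally the projectional-generator inequality $\Phi(X^*\cap M)^\perp\cap\overline{X^*\cap M}^{w^*}=\{0\}$---that the real work lies.
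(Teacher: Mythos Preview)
Your reduction is correct: by Lemma~\ref{le:generovani} and Lemma~\ref{le:projekce} (with $D=X^*$, $r=1$) everything comes down to showing $W:=\overline{X\cap M}+(X^*\cap M)_\perp=X$ for every suitable $M$ containing $X,\Sigma,\varphi$, and your identification $W^\perp=(X\cap M)^\perp\cap\overline{X^*\cap M}^{w^*}$ is right. But the proposal is not a proof---you flag the gap yourself---and the ``engine'' you offer (maximizers of $\Re g$ on $\varphi(\sigma_k)$ for $g\in X^*\cap M$, $\sigma_k$ in a dense subset of $\Sigma\cap M$) does not close it. Knowing that $f$ kills those maximizers gives no control of $f$ at an arbitrary $z\in X$: upper semicontinuity provides no lower bound on the values $\varphi(\sigma_k)$, the weak closure of $\bigcup_k(\varphi(\sigma_k)\cap M)$ need not be all of $X$, and nothing in your sketch connects the model-visible data to the point $x\in\Sigma$ where $z$ actually lives. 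Invoking characterizations~(1)/(2) or Eberlein--\v Smulian at the end does not substitute for that connection.

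The paper supplies exactly the missing idea, and it is different from yours. Assume $f\neq0$; pick $z$ with $\Re f(z)>1$ and $x\in\Sigma$ with $z\in\varphi(x)$. For each $y\in W$ choose $g_y\in X^*\cap M$ with $\Re g_y(y)<\tfrac14$ and $\Re g_y(z)>1$; by weak Lindel\"of (Lemma~\ref{L:L}) countably many $g_{y_n}$ suffice to cover $W$ by the half-spaces $\{\Re g_{y_n}<\tfrac14\}$. Now---this is the step your outline lacks---take a countable basis $\B\in M$ of $\Sigma$ (Lemma~\ref{l:predp}(xi)), enumerate the basic neighbourhoods of $x$ as $(B_n)\subset M$, and set $C_n=B_1\cap\dots\cap B_n\in M$, $E_n=\{g_{y_1},\dots,g_{y_n}\}\in M$. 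The formula $\exists a\in C_n\,\exists u\in\varphi(a)\,\forall g\in E_n:\Re g(u)>1$ is witnessed by $(x,z)$, so elementarity yields $a_n\in C_n\cap M$ and $u_n\in\varphi(a_n)\cap M$ with the same property. Since $a_n\to x$, the usc-K map forces $(u_n)$ to have a weak cluster point $u\in\varphi(x)$; since $u_n\in X\cap M$, also $u\in\overline{X\cap M}\subset W$. Then $\Re g_{y_j}(u)\ge1$ for every $j$, contradicting the covering of $W$. The whole weight is carried by approximating the non-model point $x$ through basis elements in $M$ together with the diagonal elementarity witnesses $(a_n,u_n)$; neither Eberlein--\v Smulian nor the separating family of characterization~(1) is needed.
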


This result is known as it can be proved by combination of known results. Indeed, if $X$ is Va\v{s}\'ak, then it is weakly Lindel\"of determined by \cite[Theorem 2.5]{mer87} and hence $X^*$ generates projections on $X$ by \cite[Corollary 25]{kubisSkeleton}. However, the proof of the quoted result of \cite{mer87} is quite involved and we present here a simple direct proof. It is simple in the sense that most of the technicalities are hidden in the abstract method of elementary submodels. The proof follows immediately from Proposition~\ref{P:2} below. 

Let us stress that our proof works, without any extra effort, simultaneously for both real and complex spaces.	

To give the proof we will need also the following proposition. It is a generalization of \cite[Proposition 5]{kubisSkeleton} where the statement of our main theorem is proved for weakly compactly generated Banach spaces.

\begin{prop}\label{P:1} For any suitable elementary model $M$ the following holds: Let $X$ be a Banach space and $K\subset X$ a weakly compact subset. Suppose that $X\in M$ and $K\in M$. Then $K\subset \overline{X\cap M}+(X^*\cap M)_\perp$.
\end{prop}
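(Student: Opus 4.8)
The plan is to show that every point of a weakly compact set $K \in M$ lies in $\overline{X\cap M}+(X^*\cap M)_\perp$. Fix $x_0\in K$. The natural strategy is to produce a point $y\in\overline{X\cap M}$ such that $x_0-y$ annihilates every functional in $X^*\cap M$; then $x_0-y\in(X^*\cap M)_\perp$ and we are done. Since $(X^*\cap M)_\perp$ is precisely the set of vectors on which all functionals from $X^*\cap M$ vanish, it suffices to find $y\in\overline{X\cap M}$ with $x^*(y)=x^*(x_0)$ for every $x^*\in X^*\cap M$. In other words, I want $x_0$ and $y$ to have the same ``coordinates'' relative to the countable family $X^*\cap M$.

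First I would set up the right formulas so that $M$ is rich enough: I would invoke Lemma~\ref{l:predp} to guarantee $X,X^*\in M$, that $X\cap M$ is a $\qe$- (or $(\qe+i\qe)$-)linear subspace, and that countable objects built from elements of $M$ land back in $M$. The key point is that, because $X$ is weakly Lindel\"of (Lemma~\ref{L:L}), weak compactness of $K$ can be exploited through countably many functionals living inside $M$. Concretely, I expect to use the following covering idea. Suppose for contradiction that $x_0\in K$ is \emph{not} in $C:=\overline{X\cap M}+(X^*\cap M)_\perp$. The set $C$ is a closed subspace of $X$ (it is weak$^*$-closed in the relevant sense and convex), so by Hahn--Banach there is a functional separating $x_0$ from $C$, i.e.\ a functional vanishing on $C$ but not at $x_0$. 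Any functional vanishing on $C$ in particular vanishes on $(X^*\cap M)_\perp$, and here I would use the elementarity of $M$ to argue that such a functional can be approximated (weak$^*$, on the relevant separable piece) by functionals from $X^*\cap M$, which forces it to vanish at $x_0$ as well---a contradiction. This is where absoluteness of the right existential statement, fed through Lemma~\ref{l:unique-M}, does the real work: the separating functional is an object definable from data in $M$, so a suitable witness can be pulled into $M$.

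The cleaner route, which I would actually pursue, avoids separation and argues directly. For each $x^*\in X^*\cap M$ the scalar $x^*(x_0)$ is determined; I want to realize all these scalars simultaneously by a single $y\in\overline{X\cap M}$. Consider the weakly compact set $K$ together with the countable family $X^*\cap M$: the map $x\mapsto (x^*(x))_{x^*\in X^*\cap M}$ sends $K$ into a product of scalars. The crucial claim is that the point $x_0$ and the set $K\cap(\overline{X\cap M})$ cannot be weakly separated by finitely many functionals from $X^*\cap M$ simultaneously---if they could, elementarity would reflect this separation into $M$ and produce a genuine separation inside $X\cap M$, contradicting $x_0\in K$. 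I would formalize this by writing, for each finite $\{x_1^*,\dots,x_k^*\}\subset X^*\cap M$ and each $\eps>0$, the existential statement
$$\exists x\in K\ \forall j\le k\ |x_j^*(x)-x_j^*(x_0)|<\eps,$$
which is trivially true (take $x=x_0$) and whose parameters all lie in $M$; by Lemma~\ref{l:unique-M} a witness $x\in K\cap M\subset X\cap M$ exists. Letting the finite sets increase to $X^*\cap M$ and $\eps\to0$, the resulting net (or sequence, using separability of the relevant weak-topology trace) has a weak cluster point $y\in K$ by weak compactness, and $y\in\overline{X\cap M}$ because each approximating witness lies in $X\cap M$; by construction $x^*(y)=x^*(x_0)$ for all $x^*\in X^*\cap M$, giving $x_0-y\in(X^*\cap M)_\perp$ and hence $x_0\in\overline{X\cap M}+(X^*\cap M)_\perp$.

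The main obstacle, I expect, is the passage to the limit: extracting $y$ as a weak cluster point requires controlling a net indexed by finite subsets of the \emph{countable} set $X^*\cap M$, so I must check that weak compactness of $K$ genuinely supplies a cluster point lying in $\overline{X\cap M}$, and that this cluster point matches $x_0$ on \emph{all} of $X^*\cap M$ (not just on each finite piece with a separate limit). Here the countability of $X^*\cap M$ is decisive: it lets me replace the net by a sequence of witnesses $x_m\in X\cap M$ with $|x_j^*(x_m)-x_j^*(x_0)|<\tfrac1m$ for $j\le m$, and weak compactness of $K$ together with the weak Lindel\"of / Eberlein-type properties of $K$ yields a weakly convergent subsequence whose limit is the desired $y$. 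Verifying that this limit lies in $\overline{X\cap M}$ (rather than merely in $\overline{K}^w$) is the delicate point, and I would handle it by noting each $x_m\in X\cap M$ so the whole sequence, and any weak limit of it, sits in the weak closure of $X\cap M$, which for a convex set coincides with the norm closure $\overline{X\cap M}$.
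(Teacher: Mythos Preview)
There is a genuine gap in your ``cleaner route''. You apply elementarity (Lemma~\ref{l:unique-M}) to the formula
\[
\exists x\in K\ \forall j\le k\ |x_j^*(x)-x_j^*(x_0)|<\eps
\]
and assert that ``its parameters all lie in $M$''. But $x_0$ is an \emph{arbitrary} element of $K$, not assumed to lie in $M$---indeed, the whole difficulty is precisely that $x_0$ may fail to belong to $M$. Equivalently, the scalars $x_j^*(x_0)$ need not be elements of $M$. Hence Lemma~\ref{l:unique-M} does not apply as stated, and you cannot directly pull a witness into $K\cap M$. This is exactly the pitfall the elementary-submodel method is designed around: every free parameter in the reflected formula must already live in $M$.

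The gap is repairable with a standard trick: replace each $x_j^*(x_0)$ by a rational (resp.\ $\qe+i\qe$) approximation $q_j$ within $\eps/2$, and apply elementarity instead to $\exists x\in K\ \forall j\le k\ |x_j^*(x)-q_j|<\eps/2$, whose parameters $K,x_1^*,\dots,x_k^*,q_1,\dots,q_k,\eps$ now genuinely lie in $M$. After this fix the rest of your argument (diagonal sequence in $K\cap M$, weak cluster point in $\overline{X\cap M}$ via convexity and Mazur, matching on all of $X^*\cap M$) goes through. The paper's proof sidesteps the parameter issue differently: it argues by contradiction, uses Hahn--Banach and the bipolar theorem to show the separating functional $f$ lies in $\overline{X^*\cap M}^{w^*}$, then covers the weakly compact set $\overline{K\cap M}^w$ by finitely many half-spaces $\{\Re g_{y_i}<1\}$ with $g_{y_i}\in X^*\cap M$, and finally reflects the formula $\exists y\in K\ \forall g\in A\colon \Re g(y)>1$, whose only parameters are $K$, the finite set $A\in M$, and the rational threshold $1$---so no hidden reference to the bad point survives.
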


\begin{proof} Let $\phi_1,\dots,\phi_N$ be a subformula-closed list of formulas which contains the formulas from Lemma~\ref{l:predp}, the formulas from Lemma~\ref{le:projekce} and the formulas below marked by $(*)$. Let $Y$ be a countable subset containing the set $Y_0$ from Lemma~\ref{l:predp} and the countable set provided by  Lemma~\ref{le:projekce}. Fix an arbitrary $M\prec(\phi_1,\dots,\phi_N;Y)$. Suppose that $X\in M$ and $K\in M$.

We claim that $K\subset \overline{X\cap M}+(X^*\cap M)_\perp$. Suppose, on the contrary, that $K\not\subset \overline{X\cap M}+(X^*\cap M)_\perp$. Fix some $x\in K\setminus(\overline{X\cap M}+(X^*\cap M)_\perp)$.
Since $\overline{X\cap M}+(X^*\cap M)_\perp$ is closed linear subspace of $X$ due to Lemma~\ref{le:projekce}, by the Hahn-Banach theorem we can find $f\in X^*$ such that $f|_{\overline{X\cap M}+(X^*\cap M)_\perp}=0$ and $\Re f(x)>1$.
Since $f\in ((X^*\cap M)_\perp)^\perp$, the bipolar theorem yields $f\in\overline{X^*\cap M}^{w^*}$ (note that $X^*\cap M$ is $\qe$-linear (or $\qe+i\qe$-linear in the complex case) by Lemma~\ref{l:predp}(viii)-(x)).

Further, since $K\ne\emptyset$ (note that $x\in K$), the absoluteness of the formula
$$\exists y: y\in K\eqno{(*)}$$
shows that $K\cap M\ne\emptyset$.

For any $y\in \overline{K\cap M}^w$ we have $f(y)=0$, hence there is $g_y\in X^*\cap M$ such that $\Re g_y(y)<1$ and $\Re g_y(x)>1$. The set
$$U_y=\{z\in\overline{K\cap M}^w\setsep \Re g_y(z)<1\}$$
is relatively weakly open in $\overline{K\cap M}^w$ and contains $y$. Hence $U_y$, $y\in\overline{K\cap M}^w$ is an open cover of the compact set
$\overline{K\cap M}^w$ (we refer to the weak topology). Hence there are $y_1,\dots,y_n\in \overline{K\cap M}^w$ such that $\overline{K\cap M}^w=U_{y_1}\cup\dots\cup U_{y_n}$.

Set $A=\{g_{y_1},\dots,g_{y_n}\}$. By the above $A\subset M$. Since $A$ is finite, we get $A\in M$ by Lemma~\ref{l:predp}(iii). Note that $\Re g(x)>1$ for each $g\in A$. Therefore by the absoluteness of the formula
$$\exists y\in K\; \forall g\in A: \Re g(y)>1\eqno{(*)}$$
we get $y\in K\cap M$ such that $\Re g(y)>1$ for $g\in A$. But this is a contradiction with the fact that $K\cap M\subset U_{y_1}\cup\dots\cup U_{y_n}$.
\end{proof}

\begin{prop}\label{P:2} For any suitable elementary model $M$ the following holds: Let $X$ be a Banach space, $\Sigma$ a separable metric space and $\varphi:\Sigma\to (X,w)$ a usc-K map which is onto $X$. If $X,\Sigma,\varphi\in M$, then $\overline{X\cap M}+(X^*\cap M)_\perp=X$.
\end{prop}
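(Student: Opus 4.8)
The plan is to prove $X\subseteq\overline{X\cap M}+(X^*\cap M)_\perp=:F$ (the reverse inclusion being trivial): I fix an arbitrary $x_0\in X$ and show $x_0\in F$. First I record that, by Lemma~\ref{le:projekce} applied with the ($1$-norming) subspace $D=X^*$, the set $F$ is a closed, hence weakly closed, linear subspace of $X$; this is what will let me capture $x_0$ as a weak cluster point at the end. Since $\varphi$ is onto I fix $s_0\in\Sigma$ with $x_0\in\varphi(s_0)$. The whole point is that in general $s_0\notin M$, so Proposition~\ref{P:1} cannot be applied to $\varphi(s_0)$ itself.

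The tempting shortcut---to replace $s_0$ by a nearby parameter from the dense set $\Sigma\cap M$ and invoke Proposition~\ref{P:1}---does not work, since upper semicontinuity gives no lower control of $\varphi(s_0)$ by the images $\varphi(s)$ at nearby $s$; indeed a dense set of parameters need not yield a weakly dense union of images. I would instead use the elementarity of $M$ to manufacture, \emph{for this particular $x_0$}, parameters in $M$ whose images approximate $x_0$ against every functional of $X^*\cap M$. Concretely, I enumerate $X^*\cap M=\{h_i\setsep i\in\en\}$ (it is countable) and fix by Lemma~\ref{l:predp}(xi) a countable basis $\B$ of $\Sigma$ with $\B\in M$, so $\B\subseteq M$. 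For each $n$ I pick $B_n\in\B$ with $s_0\in B_n$ and $\operatorname{diam}B_n<\tfrac1n$, and rationals $q_i^{(n)}$ (in $\qe$, resp.\ $\qe+i\qe$) with $|h_i(x_0)-q_i^{(n)}|<\tfrac1n$ for $i\le n$. Then $x_0\in\varphi(s_0)$ witnesses that
\[
\exists s\in\Sigma\ \bigl(s\in B_n\ \&\ \exists y\in\varphi(s)\ \forall i\le n\colon\ |h_i(y)-q_i^{(n)}|<\tfrac1n\bigr),
\]
and all parameters of this formula ($B_n$, the finite sets $\{h_i\}_{i\le n}$, $\{q_i^{(n)}\}_{i\le n}$, and $\tfrac1n$) lie in $M$; hence Lemma~\ref{l:unique-M} produces $s_n\in\Sigma\cap M$ satisfying it, and I fix a witnessing $y_n\in\varphi(s_n)$ with $|h_i(y_n)-q_i^{(n)}|<\tfrac1n$ for all $i\le n$.

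It then remains to extract the conclusion. Because $s_n,s_0\in B_n$ and $\operatorname{diam}B_n\to0$, we have $s_n\to s_0$, so $S:=\{s_0\}\cup\{s_n\setsep n\in\en\}$ is compact; as $\varphi$ is usc with weakly compact values, $K:=\bigcup_{s\in S}\varphi(s)$ is weakly compact, and $y_n\in K$. Consequently $(y_n)$ has a weak cluster point $y_*\in K$. From $|h_i(y_n)-h_i(x_0)|<\tfrac2n$ for $n\ge i$ I get $h_i(y_n)\to h_i(x_0)$ for each $i$, so $h(y_*)=h(x_0)$ for every $h\in X^*\cap M$, i.e.\ $y_*-x_0\in(X^*\cap M)_\perp\subseteq F$. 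On the other hand, each $s_n\in\Sigma\cap M$ gives $\varphi(s_n)\in M$ by Lemma~\ref{l:predp}(ii), a weakly compact set, whence $\varphi(s_n)\subseteq F$ by Proposition~\ref{P:1}; thus every $y_n\in F$ and, $F$ being weakly closed, $y_*\in F$. Therefore $x_0=y_*-(y_*-x_0)\in F$, and as $x_0$ was arbitrary, $F=X$.

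The crux, I expect, is the passage from the merely coordinatewise approximation $h_i(y_n)\to h_i(x_0)$ (that is, convergence in $\sigma(X,X^*\cap M)$) to a genuine \emph{weak} cluster point lying in $F$: coordinatewise convergence is by itself too weak, because a $\sigma(X,X^*\cap M)$-limit of a sequence from $F$ need not belong to $F$. This is precisely why it is indispensable to force $s_n\to s_0$, so that all the $y_n$ are trapped in the single weakly compact set $K=\varphi(S)$ where a true weak cluster point is available; the localization of the $s_n$ near $s_0$ via the shrinking basic sets $B_n\in M$ is the essential combined use of upper semicontinuity and elementarity, and replaces the failed lower-semicontinuity shortcut.
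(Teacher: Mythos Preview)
Your proof is correct and is in fact a genuinely different (and somewhat cleaner) route than the paper's. Both arguments share the same skeleton: use a countable basis $\B\in M$ to force $s_n\to s_0$ via shrinking basic neighbourhoods, invoke Proposition~\ref{P:1} to get $\varphi(s_n)\subset F$ for $s_n\in M$, and pass to a weak cluster point using the usc-K property. The difference lies in how one controls the cluster point. The paper argues by contradiction: assuming $F\ne X$, it produces by Hahn--Banach and bipolar a functional $f\in\overline{X^*\cap M}^{w^*}$ separating some $z$ from $F$, then uses the weak Lindel\"of property of $F$ (Lemma~\ref{L:L}) to extract a countable family $\{g_{y_j}\}\subset X^*\cap M$ covering $F$ by half-spaces, and derives a contradiction from $\Re g_{y_j}(u)\ge 1$ versus $\Re g_{y_j}(u)<\tfrac14$. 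You instead work directly: you simply enumerate the countable set $X^*\cap M$ itself and match $x_0$ against finitely many of its members at each stage, so that the cluster point $y_*$ satisfies $h(y_*)=h(x_0)$ for every $h\in X^*\cap M$, giving $y_*-x_0\in(X^*\cap M)_\perp$. This bypasses Hahn--Banach, the bipolar theorem, and the Lindel\"of lemma entirely, at the modest cost of encoding the two finite lists $(h_i)_{i\le n}$, $(q_i^{(n)})_{i\le n}$ as a single parameter in $M$ (e.g.\ the finite function $i\mapsto(h_i,q_i^{(n)})$) so that a fixed formula works uniformly in $n$. Your remark that only a weak cluster point---not a weak limit---is available is well taken, but since each scalar sequence $(h_i(y_n))_n$ actually converges, the identity $h_i(y_*)=h_i(x_0)$ follows anyway.
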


\begin{proof} Let $\phi_1,\dots,\phi_N$ be a subformula-closed list of formulas which contains the formulas from Lemma~\ref{l:predp}, Lemma~\ref{le:projekce} and Proposition~\ref{P:1} and the formulas below marked by $(*)$. Let $Y$ be a countable subset containing the set $Y_0$ from Lemma~\ref{l:predp} and the countable sets provided by  Lemma~\ref{le:projekce} and Proposition~\ref{P:1}. Fix an arbitrary $M\prec(\phi_1,\dots,\phi_N;Y)$. Assume that $X,\Sigma,\varphi\in M$.

Since $\Sigma\in M$ and $\Sigma$ is a separable metric space, by Lemma~\ref{l:predp}(xi) 
we can fix a countable dense subset $A\subset \Sigma$ and a countable basis $\B$ of $\Sigma$ such that $A\in M$ and $\B\in M$. Note that by Lemma~\ref{l:predp}(v) we have also $A\subset M$ and $\B\subset M$.

For any $x\in A$ its image $\varphi(x)$ is a weakly compact subset of $X$ and $\varphi(x)\in M$ by Lemma~\ref{l:predp}(ii), hence $\varphi(x)\subset \overline{X\cap M}+(X^*\cap M)_\perp$ by Proposition~\ref{P:1}. Moreover, since $\varphi$ is usc-K and $\overline{X\cap M}+(X^*\cap M)_\perp$ is weakly closed (being a closed linear subspace by Lemma~\ref{le:projekce}), the set $$\{x\in \Sigma\setsep
\varphi(x)\cap (\overline{X\cap M}+(X^*\cap M)_\perp)\ne\emptyset\}$$ is closed in $\Sigma$. Hence we get
$$
\begin{aligned}
\forall x\in A&:\varphi(x)\subset \overline{X\cap M}+(X^*\cap M)_\perp,\\
\forall x\in \Sigma&:\varphi(x)\cap (\overline{X\cap M}+(X^*\cap M)_\perp)\ne\emptyset.
\end{aligned}$$

Suppose that $\overline{X\cap M}+(X^*\cap M)_\perp\ne X$. By Hahn-Banach theorem we get $f\in X^*$ such that $f\ne 0$ but $f|_{\overline{X\cap M}+(X^*\cap M)_\perp}=0$. By the bipolar theorem we get $f\in\overline{X^*\cap M}^{w^*}$. Fix $z\in X$ with $\Re f(z)>1$ and some $x\in \Sigma$ with $z\in\varphi(x)$.

For any $y\in\overline{X\cap M}+(X^*\cap M)_\perp$ we can find $g_y\in X^*\cap M$ with $\Re g_y(y)<\frac14$ and $\Re g_y(z)>1$. Then 
$$U_y=\left\{u\in X\setsep \Re g_y(u)<\frac14\right\}$$
is a weakly open set. Moreover, $U_y$, $y\in\overline{X\cap M}+(X^*\cap M)_\perp$ is a weakly open cover of $\overline{X\cap M}+(X^*\cap M)_\perp$.
This set, being a weakly closed subset of $X$ is weakly Lindel\"of (by Lemma~\ref{L:L}). Hence we can find a sequence $(y_n)$ such that $\bigcup_{n=1}^\infty U_{y_n}\supset\overline{X\cap M}+(X^*\cap M)_\perp$.

Enumerate $\{B\in\mathcal B\setsep x\in B\}=(B_n)_{n=1}^\infty$
and set $C_n=B_1\cap\dots\cap B_n$. Since $B_n\in M$ for each $n$, by Lemma~\ref{l:predp}(vi) we have also $C_n\in M$. Further, set $E_n=\{g_{y_1},\dots,g_{y_n}\}$. By Lemma~\ref{l:predp}(iii) we get $E_n\in M$.

Fix $n\in\en$. The choice $a=x$ and $u=z$ witnesses that the formula
$$\exists a\in C_n\;\exists u\in\varphi(a)\;\forall g\in E_n: \Re g(u)>1\eqno{(*)}$$
is satisfied. Using elementarity we find $a_n\in C_n\cap M$ and $u_n\in\varphi(a_n)\cap M$ such that $\Re g(u_n)>1$ for all $g\in E_n$.

Since $a_n\in C_n$ for each $n\in\en$, we get $a_n\to x$. Since $\varphi$ is usc-K, by \cite[Lemma 3.1.1]{fabiankniha} the sequence $(u_n)$ has a weak cluster point $u\in\varphi(x)$, hence $u\in\overline{\varphi(x)\cap M}^{w}\subset\overline{X\cap M}$.

Further, $\Re g_{y_j}(u_n)>1$ whenever $n\ge j$, therefore $\Re g_{y_j}(u)\ge1$ for all $j\in \en$. On the other hand, $u$ must be covered by some $U_{y_j}$, 
hence $\Re g_{y_j}(u)<\frac14$ for some $j\in\en$. It is a contradiction completing the proof.
\end{proof}

\def\cprime{$'$}


\end{document}